\newdimen\plusheight
\def\+{\;\lower\plusheight\hbox{$+$}\;}
\newdimen\minusheight
\def\-{\;\lower\minusheight\hbox{$-$}\;}
\newdimen\cdotsheight
\def\cds{\lower\cdotsheight\hbox{$\cdots$}}
\renewcommand{\(}{\left\(}
\renewcommand{\)}{\right\)}
\renewcommand{\[}{\left[}
\renewcommand{\]}{\right]}
\theoremstyle{plain}
\newtheorem{theorem}{Theorem}[section]
\newtheorem{lemma}[theorem]{Lemma}
\theoremstyle{definition}
\theoremstyle{remark}
\numberwithin{equation}{section}
\begin{document}
\title{ Generalizations of two hypergeometric sums related to conjectures of Guo}
\author{ Arijit Jana and Liton Karmakar}
\address{Department of Mathematics, National Institute of Technology, Silchar, 788010, India}
\email{jana94arijit@gmail.com}

\address{Department of Mathematics, National Institute of Technology, Silchar, 788010, India}
\email{litonofficial8638@gmail.com}
\subjclass[2010]{11A07, 11D88, 33B15, 33C20.}
\keywords{ Rising factorial, WZ-Method, Zeilberger's Alogorithm.}
\begin{abstract}
	In $2021$, the first author and Kalita obtained two general hypergeometric formulas for sums involving certain rising factorials  to prove some supercongruence conjectures of Guo related to (B.2) and (C.2).	In this paper, we further  generalize  those formulas  by using the WZ-method and the Zeilberger algorithm respectively. 
	%In \cite{jana8}, first author and Kalita obtained two general hypergeometric formulas to prove some supercongruence conjectures of Guo \cite{guo}. Motivated by their work, we here deduce the further generalizations of those two identities.

	% Motivated by some supercongruence conjectures of Guo \cite{guo}, Jana and Kalita \cite{jana8} obtained two closed hypergeometric  formulas. Here, we give generalizations of those two results.
\end{abstract}

\maketitle
\section{Introduction and statement of results}
In $1914$, Ramanujan \cite{ramanujan} developed $17$ infinite series representations of $\frac{1}{\pi}$, including the following identity
\begin{align} \label{ident1}
\sum_{n=0}^{\infty}\frac{4n+1}{(-64)^n}\binom{2n}{n}^3 = \frac{2}{\pi}.
\end{align} 
 Throughout the paper, let $p$ be an odd prime and $r\geq 1$ be integers. In \cite{hamme}, Van Hamme listed a number of conjectural supercongruences relating those Ramanujan's formulas for $\frac{1}{\pi},$ like \eqref{ident1}, has a nice $p$-adic analogue 
\begin{align}\label{padic1}
(B.2)\hspace{2.2cm} \sum_{n=0}^{\frac{p-1}{2}}\frac{4n+1}{(-64)^n}\binom{2n}{n}^3\equiv(-1)^{\frac{p-1}{2}}p~~(\text{mod }p^3).
\end{align}
The supercongruence \eqref{padic1} was first proved by Mortenson  \cite{mortenson} using a $_6F_5$ hypergeometric  identity, and later reconfirmed by Zudilin \cite{zudilin} using the WZ-method \cite{wilf}, and by Long \cite{long1} through certain hypergeometric series identities and evaluations. In \cite{guo}, Guo  proved  the following  generalization of \eqref{padic1} by providing a $q$-analogue of \eqref{padic1} using $q$-WZ method
\begin{align}\label{padic2}
\sum_{n=0}^{\frac{p^r-1}{2}}\frac{4n+1}{(-64)^n}\binom{2n}{n}^3\equiv p^r(-1)^{\frac{(p-1)r}{2}}~~(\text{mod }p^{r+2}).
\end{align}
Employing the powerful WZ-method, Guo further gave the generalization of \eqref{padic1} as follows
\begin{align}\label{eq2}
\sum_{n=0}^{\frac{p-1}{2}}\frac{(4n+1)^3}{(-64)^n}\binom{2n}{n}^3\equiv-3p(-1)^{\frac{(p-1)}{2}}~~(\text{mod }p^2).
\end{align}
Following this, first author and Kalita \cite{jana8} deduced the further generalization of \eqref{eq2} as
\begin{align}\label{eq3}
\sum_{n=0}^{\frac{p^r-1}{2}}\frac{(4n+1)^3}{(-64)^n}\binom{2n}{n}^3 \equiv -3p^r(-1)^{\frac{(p-1)r}{2}}~~(\text{mod }p^3).
\end{align}
Since $(4n+1)^3+3(4n+1)=4(4n+1)(4n^2+2n+1)$, first author and Kalita \cite{jana1}  remarked from \eqref{padic2} and \eqref{eq3} that 
\begin{align}\label{eq5}
\sum_{n=0}^{\frac{p^r-1}{2}}\frac{(4n+1)(4n^2+2n+1)}{(-64)^n}\binom{2n}{n}^3\equiv 0~~(\text{mod }p^{r+2}).
\end{align}
In this context, Guo \cite[Conjecture 4.2]{guo} posed a stronger supercongruence conjecture for \eqref{eq5} as follows:

\begin{align}
\left\{
\begin{array}{ll}
\displaystyle \sum_{n=0}^{\frac{p^{r}-1}{2}} \frac{(4n+1)(4n^2+2n+1)}{(-64)^{n}} \binom{2n}{n}^{3} \equiv p^{3r}~~(\emph{mod}~~ p^{3r+1}),\\
\displaystyle 	\sum_{n=0}^{p^{r}-1} \frac{(4n+1)(4n^2+2n+1)}{(-64)^{n}} \binom{2n}{n}^{3} \equiv p^{3r}~~(\emph{mod}~~ p^{3r+1}).
		\end{array}
	\right.
\end{align}

%\begin{conjecture}\cite[Conjecture 4.2]{guo}\label{conjguo1}
%	Let $p$ be any odd prime and $r$  a positive integer. Then
%	\begin{align*}
%	&	\sum_{k=0}^{\frac{p^{r}-1}{2}} \frac{(4k+1)(4k^2+2k+1)}{(-64)^{k}} \binom{2k}{k}^{3} \equiv p^{3r}~~(\emph{mod}~~ p^{3r+1}),\\
%	&	\sum_{k=0}^{p^{r}-1} \frac{(4k+1)(4k^2+2k+1)}{(-64)^{k}} \binom{2k}{k}^{3} \equiv p^{3r}~~(\emph{mod}~~ p^{3r+1}).
%	\end{align*}
%\end{conjecture}
To prove the  above Conjecture, first author and Kalita \cite{jana8} gave a more general result using telescopic method as follows
\begin{align} \label{res1}
\sum_{n=0}^{M}(-1)^{n}(2\ell n+1)(\ell^2 n^2 +\ell n +1) \frac{(\frac{1}{\ell})_{n}^{3}}{(1)_n^{3}}= (-1)^{M} \frac{\left(1+\frac{1}{\ell }\right)_M^3}{(1)_M^3},
\end{align}
	where	the  rising factorial  $(a)_n$ is defined by
$$(a)_0=1,~~~(a)_n = a(a+1)\cdots (a+n-1) \text{ for } n\geq 1.$$
The rising factorial for negative index $\[\ref{gasper}, (1.2.27)\]$ is defined  as 
$$ \left(a\right)_{-n}= \frac{1}{\left(a-1\right) \left(a-2\right) \cdots \left(a-n\right)} = \frac{1}{\left(a-n\right)_{n}}= \frac{\left(-1\right)^n}{\left(1-a\right)_{n}}. $$

Motivated by the above result,  we here  give a further generalization of \eqref{res1} by giving a new WZ-pair in the following theorem. 
\begin{theorem}\label{thmguo1a}
Let $\ell \geq 1$, $s \geq 0, M \geq 0$ with  $M \geq s$ be integers. Then
\begin{align*}
&\displaystyle \sum_{n=s}^{M} \left(-1\right)^{n} \left(2\ell n +1\right) \left(\ell^2n^2 + \ell n + 1 -\ell^2 s^2 \right) \frac{\left(\frac{1}{\ell}\right)_{n+s} \left(\frac{1}{\ell}\right)_{n-s} \left(\frac{1}{\ell}\right)_{n}}{\left(1\right)_{n+s} \left(1\right)_{n-s} \left(1\right)_{n}}\\
&= \left(-1\right)^{M} \frac{\left(1+\frac{1}{\ell}\right)_{M+s} \left(1+\frac{1}{\ell}\right)_{M-s} \left(1+\frac{1}{\ell}\right)_{M}}{\left(1\right)_{M+s} \left(1\right)_{M-s} \left(1\right)_{M}}.
\end{align*}

\end{theorem}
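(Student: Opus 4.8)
The plan is to prove the identity by exhibiting an explicit telescoping certificate, i.e.\ a new WZ-pair. Write the summand as
$$F(s,n)=(-1)^{n}(2\ell n+1)\big(\ell^{2}n^{2}+\ell n+1-\ell^{2}s^{2}\big)\,\frac{(\frac1\ell)_{n+s}(\frac1\ell)_{n-s}(\frac1\ell)_{n}}{(1)_{n+s}(1)_{n-s}(1)_{n}},$$
and put
$$G(s,n)=(-1)^{n-1}\,\frac{(1+\frac1\ell)_{n-1+s}(1+\frac1\ell)_{n-1-s}(1+\frac1\ell)_{n-1}}{(1)_{n-1+s}(1)_{n-1-s}(1)_{n-1}}.$$
Everything reduces to the single relation $F(s,n)=G(s,n+1)-G(s,n)$, valid for all integers $\ell\ge 1$ and $n\ge s\ge 0$; summing it over $n$ from $s$ to $M$ then telescopes to $G(s,M+1)-G(s,s)$.

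To check the relation I would first normalise every Pochhammer symbol appearing in $G$ against $R(n):=\frac{(\frac1\ell)_{n+s}(\frac1\ell)_{n-s}(\frac1\ell)_{n}}{(1)_{n+s}(1)_{n-s}(1)_{n}}$, using the elementary shift $(1+\frac1\ell)_{m}=(1+\ell m)(\frac1\ell)_{m}$. With $P(m):=(1+\ell(m+s))(1+\ell(m-s))(1+\ell m)=(1+\ell m)^{3}-\ell^{2}s^{2}(1+\ell m)$, this rewrites $G(s,n+1)=(-1)^{n}R(n)P(n)$, while the ratio identity $R(n-1)=R(n)\,\ell^{3}n(n^{2}-s^{2})/P(n-1)$ rewrites $G(s,n)=-(-1)^{n}R(n)\,\ell^{3}n(n^{2}-s^{2})$. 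Hence $G(s,n+1)-G(s,n)=(-1)^{n}R(n)\big(P(n)+\ell^{3}n(n^{2}-s^{2})\big)$, and the relation collapses to the polynomial identity
$$P(n)+\ell^{3}n(n^{2}-s^{2})=(2\ell n+1)\big(\ell^{2}n^{2}+\ell n+1-\ell^{2}s^{2}\big),$$
which is confirmed by expanding (both sides equal $2\ell^{3}n^{3}+3\ell^{2}n^{2}+3\ell n+1-\ell^{2}s^{2}-2\ell^{3}ns^{2}$).

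Finally $G(s,M+1)$ is exactly the right-hand side of the theorem, so it only remains to see $G(s,s)=0$. Here $n-1-s=-1$, and by the negative-index convention recalled above $1/(1)_{-1}=(0)_{1}=0$ while $(1+\frac1\ell)_{-1}=\ell$ is finite, so the whole expression vanishes. If one prefers to avoid this degenerate evaluation, one can instead take $M=s$ as base case — where the claim follows at once from $(1+\frac1\ell)_{m}=(1+\ell m)(\frac1\ell)_{m}$ — and induct on $M$ using $F(s,M)=\mathrm{RHS}(M)-\mathrm{RHS}(M-1)$, which is again the polynomial identity above. Specialising $s=0$ recovers \eqref{res1}.

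The genuinely creative step is writing down $G$; once it is in hand the verification is a mechanical rational-function computation, so the hard part is producing that certificate (which Gosper's / Zeilberger's algorithm supplies). The one point requiring care is the boundary term $G(s,s)$ and the degenerate Pochhammer value it involves, which is why I would either invoke the stated negative-index convention explicitly or route the final step through induction on $M$.
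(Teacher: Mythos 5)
Your proof is correct, and it reaches the result by a somewhat more direct route than the paper. The paper first establishes a genuine two-variable WZ relation $F(n,k-1)-F(n,k)=G(n+1,k)-G(n,k)$ (Lemma \ref{Lemma1}), then specializes to $k=1$, combines the two terms $F(n,0)$ and $F(n,1)$ into the summand of the theorem, and telescopes the right-hand side in $n$. You instead exhibit a one-variable antidifference: the summand itself equals $G(s,n+1)-G(s,n)$ for an explicit closed form $G$, so the sum telescopes immediately. In substance the two certificates coincide --- your
$G(s,n)=(-1)^{n-1}\ell^{3}\,(\tfrac1\ell)_{n+s}(\tfrac1\ell)_{n-s}(\tfrac1\ell)_{n}\big/\big((1)_{n+s-1}(1)_{n-s-1}(1)_{n-1}\big)$
is exactly the paper's $\ell^{2}(\tfrac1\ell)_{1+s}(\tfrac1\ell)_{1-s}\,G(n,1)$ --- but your packaging removes the auxiliary variable $k$ and the step of recombining $F(n,0)-F(n,1)$, at the cost of not producing a reusable WZ-pair. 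Your polynomial verification
$P(n)+\ell^{3}n(n^{2}-s^{2})=(2\ell n+1)(\ell^{2}n^{2}+\ell n+1-\ell^{2}s^{2})$
checks out, and you correctly handle the one delicate point, the boundary value $G(s,s)=0$, by the same negative-index convention $1/(1)_{-1}=0$ that the paper invokes for $G(s,1)=0$ (with the induction on $M$ as a clean fallback). So the argument is complete; the difference from the paper is one of organization rather than of mathematical content.
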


\par  In $2011,$ using hypergeometric identities, Long \cite{long1} proved that for prime $p\geq 5$
\begin{align}\label{padic3}
(C.2)\hspace{2.2cm}\sum_{n=0}^{\frac{p-1}{2}}\frac{4n+1}{256^n}\binom{2n}{n}^4\equiv p ~~(\text{mod } p^{4}).
\end{align}
Then, Guo and Wang \cite{guo1} gave the following generalization  of \eqref{padic3} by establishing a $q$-analogue
\begin{align}\label{eq6}
\sum_{n=0}^{\frac{p^r-1}{2}}\frac{4n+1}{256^n}\binom{2n}{n}^4\equiv p^r ~~(\text{mod } p^{r+3}).
\end{align}
Using hypergeometric series identities and evaluations, first author and Kalita \cite{jana3} have also given a proof of \eqref{eq6}. In the same paper, first author and Kalita proved the following generalization of \eqref{eq6} as
\begin{align}\label{eq7}
\sum_{n=0}^{\frac{p^r-1}{2}}\frac{(4n+1)^3}{256^n}\binom{2n}{n}^4\equiv -p^r ~~(\text{mod } p^{r+3}).
\end{align}
In this context, Guo \cite[Conjecture 4.4]{guo} has provided another supercongruence  conjecture  related to \eqref{eq6} and \eqref{eq7} as follows:
\begin{align}
\left\{
\begin{array}{ll}
\displaystyle 	\sum_{n=0}^{\frac{p^{r}-1}{2}} \frac{(4n+1)(8n^2+4n+1)}{256^{n}} \binom{2n}{n}^{4} \equiv p^{4r}~~(\emph{mod}~~ p^{4r+1}),\\
\displaystyle 	\sum_{n=0}^{p^{r}-1} \frac{(4n+1)(8n^2+4n+1)}{256^{n}} \binom{2n}{n}^{4} \equiv p^{4r}~~(\emph{mod}~~ p^{4r+1}).
\end{array}
\right.
\end{align}

To prove the above Conjecture, first author and Kalita \cite{jana8}  deduced a more general result in the closed form via telescopic method as follows 
\begin{align} \label{res2}
\sum_{n=0}^{M}(2\ell n+1)(2\ell^2 n^2 +2\ell n +1) \frac{(\frac{1}{\ell})_{n}^{4}}{(1)_n^{4}}= \frac{\left(1+\frac{1}{\ell }\right)_M^4}{(1)_M^4}.
\end{align}
Motivated by the above result, we here give a further generalization of \eqref{res2} as 
\begin{theorem}\label{thmguo1b}
	Let $\ell \geq 1$, $s, M \geq 0$ with  $M \geq s$ be integers. Then
	\begin{center}
		$\displaystyle \sum_{n=s}^{M} \left(2\ell n +1\right) \left(2\ell^2n^2 + 2\ell n +1 -\ell^2 s^2 \right) \frac{\left(\frac{1}{\ell}\right)_{n+s} \left(\frac{1}{\ell}\right)_{n-s} \left(\frac{1}{\ell}\right)_{n}^2}{\left(1\right)_{n+s} \left(1\right)_{n-s} \left(1\right)_{n}^2}=  \frac{\left(1+\frac{1}{\ell}\right)_{M+s} \left(1+\frac{1}{\ell}\right)_{M-s} \left(1+\frac{1}{\ell}\right)_{M}^2}{\left(1\right)_{M+s} \left(1\right)_{M-s} \left(1\right)_{M}^2}.$
	\end{center}
\end{theorem}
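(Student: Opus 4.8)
The plan is to prove Theorem~\ref{thmguo1b} by creative telescoping, using the rational certificate that Zeilberger's algorithm returns for the summand. Write
\begin{align*}
T(n):=\frac{\left(\frac{1}{\ell}\right)_{n+s}\left(\frac{1}{\ell}\right)_{n-s}\left(\frac{1}{\ell}\right)_{n}^{2}}{(1)_{n+s}(1)_{n-s}(1)_{n}^{2}},\qquad F(n):=(2\ell n+1)(2\ell^{2}n^{2}+2\ell n+1-\ell^{2}s^{2})\,T(n),
\end{align*}
so that the left-hand side of the theorem is $\sum_{n=s}^{M}F(n)$. Applying Zeilberger's algorithm to $F(n)$, with $\ell$ and $s$ treated as parameters (here $F$ is already Gosper-summable in $n$, so the recurrence is of order zero), returns the certificate $R(n)=\dfrac{\ell^{4}n^{2}(n^{2}-s^{2})}{(2\ell n+1)(2\ell^{2}n^{2}+2\ell n+1-\ell^{2}s^{2})}$; equivalently, setting $G(n):=R(n)\,F(n)=\ell^{4}n^{2}(n^{2}-s^{2})\,T(n)$, the assertion to be verified is
\begin{align*}
F(n)=G(n+1)-G(n)\qquad\text{for all integers }n\ge s.
\end{align*}

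To check this relation I would divide both sides by $T(n)$ and use the forward shift ratio
\begin{align*}
\frac{T(n+1)}{T(n)}=\frac{(1+\ell(n+s))(1+\ell(n-s))(1+\ell n)^{2}}{\ell^{4}(n+s+1)(n-s+1)(n+1)^{2}}.
\end{align*}
Since $(n+1)^{2}-s^{2}=(n+s+1)(n-s+1)$, the polynomial factors produced by $G(n+1)$ cancel the denominator of the shift ratio exactly, and the relation collapses to the polynomial identity
\begin{align*}
(1+\ell(n+s))(1+\ell(n-s))(1+\ell n)^{2}-\ell^{4}n^{2}(n^{2}-s^{2})=(2\ell n+1)(2\ell^{2}n^{2}+2\ell n+1-\ell^{2}s^{2}).
\end{align*}
Abbreviating $A=2\ell n+1$, $p=\ell^{2}n^{2}$, $q=\ell^{2}s^{2}$, one has $(1+\ell(n+s))(1+\ell(n-s))=A+p-q$, $(1+\ell n)^{2}=A+p$ and $\ell^{4}n^{2}(n^{2}-s^{2})=p^{2}-pq$, so the left side is $(A+p-q)(A+p)-(p^{2}-pq)=A(A+2p-q)$, while the right side is $A(2p+A-q)=A(A+2p-q)$; the identity follows.

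Granting $F(n)=G(n+1)-G(n)$, the sum telescopes to $\sum_{n=s}^{M}F(n)=G(M+1)-G(s)$. The lower endpoint vanishes because $G(s)=\ell^{4}s^{2}(s^{2}-s^{2})\,T(s)=0$, and $T(s)$ is well defined since no Pochhammer symbol of negative index occurs for $s\le n\le M+1$. For the upper endpoint, the cancellation performed above already gives $G(n+1)=(1+\ell(n+s))(1+\ell(n-s))(1+\ell n)^{2}\,T(n)$, whence
\begin{align*}
\sum_{n=s}^{M}F(n)=G(M+1)=(1+\ell(M+s))(1+\ell(M-s))(1+\ell M)^{2}\,T(M).
\end{align*}
Applying the elementary relation $\left(1+\frac{1}{\ell}\right)_{k}=(\ell k+1)\left(\frac{1}{\ell}\right)_{k}$ with $k=M+s$, with $k=M-s$, and twice with $k=M$ rewrites the right-hand side above as $\dfrac{\left(1+\frac{1}{\ell}\right)_{M+s}\left(1+\frac{1}{\ell}\right)_{M-s}\left(1+\frac{1}{\ell}\right)_{M}^{2}}{(1)_{M+s}(1)_{M-s}(1)_{M}^{2}}$, which is precisely the right-hand side of the theorem, so the proof is complete.

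The one genuinely nontrivial step is locating the certificate $G(n)=\ell^{4}n^{2}(n^{2}-s^{2})\,T(n)$, and this is exactly what Zeilberger's algorithm supplies automatically; once it is in hand, everything reduces to the single displayed polynomial identity together with one routine Pochhammer manipulation, so I do not anticipate any real obstacle. As a consistency check, the specialization $s=0$ collapses the whole argument to \eqref{res2}, with certificate $G(n)=\ell^{4}n^{4}\,T(n)$.
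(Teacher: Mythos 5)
Your proof is correct: the shift ratio of $T(n)$, the polynomial identity $(A+p-q)(A+p)-(p^{2}-pq)=A(A+2p-q)$, and both endpoint evaluations ($G(s)=0$ because of the factor $s^{2}-s^{2}$, and $G(M+1)$ matching the closed form via $\left(1+\frac{1}{\ell}\right)_{k}=(\ell k+1)\left(\frac{1}{\ell}\right)_{k}$) all check out, so $G(n)=\ell^{4}n^{2}(n^{2}-s^{2})T(n)$ really is an antidifference of the summand and the telescoping is legitimate. The only difference from the paper is one of packaging: the paper first proves a two-variable contiguous relation $(\ell k-\ell+1)F(n,k-1)-(\ell k-\ell+2)F(n,k)=G(n+1,k)-G(n,k)$ for a pair $F(n,k)$, $G(n,k)$ carrying an auxiliary index $k$, then specializes to $k=1$, multiplies by $\ell^{2}\left(\frac{1}{\ell}\right)_{1+s}\left(\frac{1}{\ell}\right)_{1-s}$, and sums over $n$; after that specialization their $\ell^{2}\left(\frac{1}{\ell}\right)_{1+s}\left(\frac{1}{\ell}\right)_{1-s}G(n,1)$ coincides exactly with your $G(n)$, so the identity actually being telescoped is the same in both arguments. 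Your single-variable Gosper-style presentation is more economical --- it avoids the combination $F(n,0)-2F(n,1)$ and the extra parameter $k$, which for this theorem serves only to record how the certificate was discovered --- and it makes the $s=0$ degeneration to \eqref{res2} immediate.
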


 \par The rising factorial $\frac{(\frac{1}{\ell})_{n}}{(1)_n}$ appears in a number of hypergeometric closed-sum formulas.
 In the literature, we have not found any general identities that resemble the sum containing $ \frac{\left(\frac{1}{\ell}\right)_{n+s} \left(\frac{1}{\ell}\right)_{n-s} }{\left(1\right)_{n+s} \left(1\right)_{n-s} }.$
 The paper is organised as follows. In Section $2$, we shall first give  a new WZ-pair via the WZ-method designed by Wilf and Zeilberger \cite{wilf}, and then using this we prove Theorem \ref{thmguo1a}. In Section $3$,  we prove  Theorem \ref{thmguo1b} by using Zeilberger algorithm.
\section{WZ-method and Proof of Theorem \ref{thmguo1a} }

Wilf-Zeilberger method is based on preliminary human guess to finding a WZ-pair. Motivated by the work of \cite{ekhad,guillera1, guillera2,zudilin}, we here employ the below hypergeometric function to deduce the following lemma
\begin{center}
	$\displaystyle F(n,k)=  (-1)^{n+k}(2\ell n+1) \frac{{\left (\frac{1}{\ell} \right)}_{n+s}{\left (\frac{1}{\ell} \right)}_{n-s}{\left (\frac{1}{\ell} \right)}_{n+k}}{{\left (1 \right)}_{n+s}{\left (1 \right)}_{n-s}{\left (1 \right)}_{n-k}{\left (\frac{1}{\ell} \right)}_{k+s}{\left (\frac{1}{\ell} \right)}_{k-s}}.$
\end{center}

\begin{lemma} \label{Lemma1}
	Let $\ell \geq 1$, $n, s, k \geq 0$ with $n \geq s$ and $n \geq k$ be integers. Suppose
	\begin{center}
		$\displaystyle F(n,k)=  (-1)^{n+k}(2\ell n+1) \frac{{\left (\frac{1}{\ell} \right)}_{n+s}{\left (\frac{1}{\ell} \right)}_{n-s}{\left (\frac{1}{\ell} \right)}_{n+k}}{{\left (1 \right)}_{n+s}{\left (1 \right)}_{n-s}{\left (1 \right)}_{n-k}{\left (\frac{1}{\ell} \right)}_{k+s}{\left (\frac{1}{\ell} \right)}_{k-s}}$
	\end{center}
	and
	\begin{center}
		$\displaystyle G(n,k)=  (-1)^{n+k} \ell \frac{{\left (\frac{1}{\ell} \right)}_{n+s}{\left (\frac{1}{\ell} \right)}_{n-s}{\left (\frac{1}{\ell} \right)}_{n+k-1}}{{\left (1 \right)}_{n+s-1}{\left (1 \right)}_{n-s-1}{\left (1 \right)}_{n-k}{\left (\frac{1}{\ell} \right)}_{k+s}{\left (\frac{1}{\ell} \right)}_{k-s}},$	
	\end{center}
	where $1 \slash \left(1\right)_{m}=0$ for $m= -1, -2, \ldots.$ Then
		\begin{equation} \label{Eqn 3.1}
	F(n,k-1) - F(n,k)= G(n+1,k) - G(n,k).
	\end{equation} 
\end{lemma}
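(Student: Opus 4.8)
The plan is to verify the WZ-equation \eqref{Eqn 3.1} by a direct, purely mechanical computation: divide through by $F(n,k)$ and show that the resulting rational-function identity in $n$, $k$, $s$, $\ell$ holds as an identity. The point of the WZ-method is that, once the certificate $G$ is written down, checking \eqref{Eqn 3.1} requires no cleverness — only care with the shift relations for rising factorials. First I would record the basic shift formulas I will use repeatedly: for any parameter $a$ and index $m$,
\begin{align*}
(a)_{m+1} &= (a+m)\,(a)_m, & (a)_{m-1} &= \frac{(a)_m}{a+m-1},
\end{align*}
and in particular $(1)_{m+1}=(m+1)(1)_m$, $(1)_{m-1}=(1)_m/m$, $\left(\tfrac1\ell\right)_{m+1}=\tfrac{1+\ell m}{\ell}\left(\tfrac1\ell\right)_m$, together with the negative-index convention $(a)_{-m}=1/(a-m)_m$ already recalled in the introduction and the convention $1/(1)_m=0$ for $m\le -1$ stated in the lemma.

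Next I would reduce each of the four terms in \eqref{Eqn 3.1} to $F(n,k)$ times an explicit rational factor. For $F(n,k-1)$, replacing $k$ by $k-1$ changes only the three factors depending on $k$: $\left(\tfrac1\ell\right)_{n+k}\to\left(\tfrac1\ell\right)_{n+k-1}$, $(1)_{n-k}\to(1)_{n-k+1}$, $\left(\tfrac1\ell\right)_{k-s}\to\left(\tfrac1\ell\right)_{k-s-1}$, $\left(\tfrac1\ell\right)_{k+s}\to\left(\tfrac1\ell\right)_{k+s-1}$, and the sign $(-1)^{n+k}\to(-1)^{n+k-1}$; applying the shift formulas turns $F(n,k-1)/F(n,k)$ into a ratio of linear factors in $n,k,s,\ell$. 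For $G(n,k)$, comparing with $F(n,k)$ I see the numerator loses $(2\ell n+1)$ and gains $\ell$, $\left(\tfrac1\ell\right)_{n+k}$ becomes $\left(\tfrac1\ell\right)_{n+k-1}$, and $(1)_{n+s},(1)_{n-s}$ become $(1)_{n+s-1},(1)_{n-s-1}$, so $G(n,k)/F(n,k)$ is again an explicit rational factor; and $G(n+1,k)/F(n,k)$ is obtained from that by the further substitution $n\mapsto n+1$ using the shift formulas once more on every $n$-dependent factor of $G$. I would carry all of this out over a common denominator.

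Having expressed \eqref{Eqn 3.1} as $F(n,k)\big(R_1(n,k)-1\big)=F(n,k)\big(R_2(n,k)-R_3(n,k)\big)$ with $R_1,R_2,R_3$ explicit rational functions, I would cancel $F(n,k)$ (valid wherever it is nonzero; the boundary cases where a rising factorial vanishes are handled by the $1/(1)_m=0$ convention and continuity in the parameters, or simply checked separately) and clear denominators, reducing the claim to a polynomial identity. The main obstacle — really the only one — is the bookkeeping: one must track the cubic factor coming from $(2\ell n+1)$ interacting with the shifted linear factors, and confirm that the degree-in-$s$, degree-in-$n$ polynomial identity that results is correct. This is a finite check; expanding both sides and collecting coefficients (equivalently, verifying it at sufficiently many numerical values of $n,k,s,\ell$ and invoking that a polynomial identity in finitely many variables of bounded degree is determined by enough sample points) completes the proof. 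It is worth noting that the certificate $G$ was itself produced by Zeilberger's algorithm applied to $F$, which is exactly why \eqref{Eqn 3.1} must hold; the proof above is the routine a posteriori verification.
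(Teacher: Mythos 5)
Your proposal is correct and follows essentially the same route as the paper: the authors likewise divide through by $F(n,k)$, compute the three ratios $F(n,k-1)/F(n,k)$, $G(n+1,k)/F(n,k)$, $G(n,k)/F(n,k)$ as explicit rational functions via the shift relations for rising factorials, and verify the resulting rational-function identity directly. The only difference is that the paper writes out these three ratios explicitly before asserting the final polynomial check, whereas you leave them indicated; carrying out that bookkeeping is all that separates your sketch from the published argument.
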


\begin{proof}
	We have
	
	\begin{align*}
	\displaystyle \frac{F(n,k-1)}{F(n,k)} & = - \frac{\left(k+s+ \frac{1-\ell}{\ell} \right) \left(k-s + \frac{1-\ell}{\ell}\right)}{\left(n+k+ \frac{1-\ell}{\ell}\right) \left(n-k+1 \right)},
	\end{align*}
	\begin{align*}
	\displaystyle \frac{G(n+1,k)}{F(n,k)} & = - \frac{\ell \cdot \left(n+\frac{1}{\ell} + s\right) \left(n + \frac{1}{\ell} -s\right)}{\left(2\ell n + 1\right)\left(n-k+1\right)},
	\end{align*}	
	and
	\begin{align*}
	\displaystyle \frac{G(n,k)}{F(n,k)} & = \frac{\ell \cdot \left(n^{2} - s^{2}\right)}{\left(2 \ell n + 1\right)\left(n+k+ \frac{1-\ell}{\ell}\right)}.
	\end{align*}
	
	Noting that 
	\begin{align*}
	\frac{F(n,k-1)}{F(n,k)} -1 & = - \left\{ \frac{k^{2} + 2k\left(\frac{1-\ell}{\ell}\right) + \left(\frac{1-\ell}{\ell}\right)^2 - s^2  + \left(n+k+ \frac{1-\ell}{\ell}\right) \left(n-k+1 \right) }{\left(n+k+ \frac{1-\ell}{\ell}\right) \left(n-k+1 \right)}\right\},
	\end{align*}
	and
	\begin{align*}
	\frac{G(n+1,k)}{F(n,k)} - \frac{G(n,k)}{F(n,k)} & = - \ell \cdot \left\{ \frac {\left(n^2+\frac{2n}{\ell}+\frac{1}{\ell^{2}}-s^2\right) \left(n+k+ \frac{1-\ell}{\ell}\right) + \left(n^2-s^2\right) \left(n-k+1\right)  }{\left(2\ell n+1\right)\left(n+k+ \frac{1-\ell}{\ell}\right) \left(n-k+1 \right)}\right\}.
	\end{align*}
	It is easy to obtain that
	\begin{center}
		$\frac{F(n,k-1)}{F(n,k)}-1= \frac{G(n+1,k)}{F(n,k)} - \frac{G(n,k)}{F(n,k)},$
	\end{center}
	and hence
	\begin{center}
		$F(n,k-1) - F(n,k)= G(n+1,k) - G(n,k).$
	\end{center}
	\end{proof}

\begin{proof}[Proof of theorem~\ref{thmguo1a}]

	Multiplying both sides of  \eqref{Eqn 3.1} by $\ell^2 \left(\frac{1}{\ell}\right)_{k+s} \left(\frac{1}{\ell}\right)_{k-s},$ we get
	\begin{align*}
	\displaystyle \ell^2 \left(\frac{1}{\ell}\right)_{k+s} \left(\frac{1}{\ell}\right)_{k-s} \left\{F(n,k-1) - F(n,k)\right\} & = \ell^2 \left(\frac{1}{\ell}\right)_{k+s} \left(\frac{1}{\ell}\right)_{k-s} \left\{G(n+1,k) - G(n,k)\right\}.
	\end{align*}
	Substituting $k=1$ and then taking sum on both sides with $n$ ranging from $s$ to $M$, we obtain
	\begin{align*}
	\displaystyle  \sum_{n=s}^{M} \ell^2 \left(\frac{1}{\ell}\right)_{1+s} \left(\frac{1}{\ell}\right)_{1-s} \left\{F(n,0) - F(n,1)\right\}  = \ell^2 \left(\frac{1}{\ell}\right)_{1+s} \left(\frac{1}{\ell}\right)_{1-s} \left\{G(M+1,1) - G(s,1)\right\}. 
	\end{align*}
		We have 
		\begin{align*}
			&\displaystyle  \sum_{n=s}^{M} \ell^2 \left(\frac{1}{\ell}\right)_{1+s} \left(\frac{1}{\ell}\right)_{1-s} \left\{F(n,0) - F(n,1)\right\} \\
			 & = \displaystyle  \sum_{n=s}^{M} \ell^2 \left(\frac{1}{\ell}\right)_{1+s}      \left(\frac{1}{\ell}\right)_{1-s} \left\{ \frac{{(-1)^{n}(2\ell n+1)\left (\frac{1}{\ell} \right)}_{n+s}{\left (\frac{1}{\ell} \right)}_{n-s}{\left (\frac{1}{\ell} \right)}_{n}}{{\left (1 \right)}_{n+s}{\left (1 \right)}_{n-s}{\left (1 \right)}_{n}{\left (\frac{1}{\ell} \right)}_{s}{\left (\frac{1}{\ell} \right)}_{-s}}  \right. + \left.    \frac{{(-1)^{n}(2\ell n+1) \left (\frac{1}{\ell} \right)}_{n+s}{\left (\frac{1}{\ell} \right)}_{n-s}{\left (\frac{1}{\ell} \right)}_{n+1}}{{\left (1 \right)}_{n+s}{\left (1 \right)}_{n-s}{\left (1 \right)}_{n-1}{\left (\frac{1}{\ell} \right)}_{1+s}{\left (\frac{1}{\ell} \right)}_{1-s}}      \right\}\\
			& =\sum_{n=s}^{M} \frac{\left(-1\right)^n \left(2 \ell n +1\right) \ell^2 \left(\frac{1}{\ell}\right)_{n+s} \left(\frac{1}{\ell}\right)_{n-s} \left(\frac{1}{\ell}\right)_{n}}{\left(1\right)_{n+s} \left(1\right)_{n-s} \left(1\right)_{n}} \left\{\frac{\left(\frac{1}{\ell}\right)_{1+s} \left(\frac{1}{\ell}\right)_{1-s}}{\left(\frac{1}{\ell}\right)_{s} \left(\frac{1}{\ell}\right)_{-s}} \right. + \left. \frac{\left(\frac{1}{\ell}\right)_{n+1} \left(1\right)_{n}}{\left(1\right)_{n-1} \left(\frac{1}{\ell}\right)_{n}}  \right\} \\
			& = \sum_{n=s}^{M} \frac{\left(-1\right)^n \left(2 \ell n +1\right) \ell^2 \left(\frac{1}{\ell}\right)_{n+s} \left(\frac{1}{\ell}\right)_{n-s} \left(\frac{1}{\ell}\right)_{n}}{\left(1\right)_{n+s} \left(1\right)_{n-s} \left(1\right)_{n}} \left\{\left(\frac{1}{\ell} + s\right) \left(\frac{1}{\ell} -s\right) \right. + \left. n \left(\frac{1}{\ell} + n\right)\right\} \\
			& = \displaystyle \sum_{n=s}^{M} \left(-1\right)^{n} \left(2\ell n +1\right) \left(\ell^2n^2 + \ell n + 1 -\ell^2 s^2 \right) \frac{\left(\frac{1}{\ell}\right)_{n+s} \left(\frac{1}{\ell}\right)_{n-s} \left(\frac{1}{\ell}\right)_{n}}{\left(1\right)_{n+s} \left(1\right)_{n-s} \left(1\right)_{n}}.
		\end{align*}

	Since $\frac{1}{\left(1\right)_{-1}}=0,$ we have $G(s,1)=0.$ As a result,
	\begin{align*}
	\ell^2 \left(\frac{1}{\ell}\right)_{1+s} \left(\frac{1}{\ell}\right)_{1-s} \left\{G(M+1,1) - G(s,1)\right\} &  = \left(-1\right)^{M} \ell^3 \frac{\left(\frac{1}{\ell}\right)_{M+s+1} \left(\frac{1}{\ell}\right)_{M-s+1} \left(\frac{1}{\ell}\right)_{M+1}}{\left(1\right)_{M+s} \left(1\right)_{M-s} \left(1\right)_{M}}   \\
	& = \left(-1\right)^{M} \frac{\left(1+\frac{1}{\ell}\right)_{M+s} \left(1+\frac{1}{\ell}\right)_{M-s} \left(1+\frac{1}{\ell}\right)_{M}}{\left(1\right)_{M+s} \left(1\right)_{M-s} \left(1\right)_{M}}.
	\end{align*}
	This completes the proof.
\end{proof}
\section{ Zeilberger algorithm and Proof of Theorem  \ref{thmguo1b}}

For integers $n\geq k \geq 0$, suppose $F(n,k)$ is a hypergeometric function in $n$ and $k$. Employing Zeilberger's algorithm, one can find another hypergeometric function $G(n,k)$, as well as polynomials $p(k)$ and $ q(k)$ such that
$$ p(k)	F(n,k-1)-q(k)F(n,k)= G(n+1, k)-G(n,k).$$ However the process is not so obvious always.
% If $p(k) = q(k-1)$, then $q(k)F(n, k)$ and $G(n, k)$ form a WZ-pair.
  We here use the Zeilberger's algorithm for the function
\begin{center}
	$\displaystyle F(n,k)=  (-1)^{k}(2\ell n +1) \frac{{\left (\frac{1}{\ell} \right)}_{n+s}{\left (\frac{1}{\ell} \right)}_{n-s}{\left(\frac{1}{\ell}\right)_{n}}{\left (\frac{1}{\ell} \right)}_{n+k}}{{\left (1 \right)}_{n+s}{\left (1 \right)}_{n-s}{\left(1\right)_{n}}{\left (1 \right)}_{n-k}{\left (\frac{1}{\ell} \right)}_{k+s}{\left (\frac{1}{\ell} \right)}_{k-s}}.$
\end{center}
 Our motivation of taking the above hypergeometric function is based on WZ-pairs of \cite{ekhad,guillera1,guillera2,wang}.

\begin{lemma} \label{Lemma2}
	Let $\ell \geq 1$, $n, s, k \geq 0$ with $n \geq s$ and $n \geq k$ be integers. Suppose
	\begin{center}
		$\displaystyle F(n,k)=  (-1)^{k}(2\ell n +1) \frac{{\left (\frac{1}{\ell} \right)}_{n+s}{\left (\frac{1}{\ell} \right)}_{n-s}{\left(\frac{1}{\ell}\right)_{n}}{\left (\frac{1}{\ell} \right)}_{n+k}}{{\left (1 \right)}_{n+s}{\left (1 \right)}_{n-s}{\left(1\right)_{n}}{\left (1 \right)}_{n-k}{\left (\frac{1}{\ell} \right)}_{k+s}{\left (\frac{1}{\ell} \right)}_{k-s}}$
	\end{center}
	and
	\begin{center}
		$\displaystyle G(n,k)=  (-1)^{k-1} {\ell}^2 \frac{{\left (\frac{1}{\ell} \right)}_{n+s}{\left (\frac{1}{\ell} \right)}_{n-s}{\left(\frac{1}{\ell}\right)_{n}}{\left (\frac{1}{\ell} \right)}_{n+k-1}}{{\left (1 \right)}_{n+s-1}{\left (1 \right)}_{n-s-1}{\left(1\right)_{n-1}}{\left (1 \right)}_{n-k}{\left (\frac{1}{\ell} \right)}_{k+s}{\left (\frac{1}{\ell} \right)}_{k-s}},$	
	\end{center}
	where $1 \slash \left(1\right)_{m}=0$ for $m= -1, -2, \ldots.$ Then
		\begin{equation} \label{Eqn 3.2}
	\left(\ell k - \ell + 1\right)F(n,k-1) - \left(\ell k -\ell + 2\right)F(n,k)= G(n+1,k) - G(n,k).
	\end{equation} 
	
\end{lemma}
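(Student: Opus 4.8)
The plan is to verify the WZ-type relation \eqref{Eqn 3.2} by the same direct ratio computation used in Lemma~\ref{Lemma1}, only now carrying the extra polynomial factors $(\ell k-\ell+1)$ and $(\ell k-\ell+2)$ attached to the two $F$-terms. First I would record the three elementary ratios $F(n,k-1)/F(n,k)$, $G(n+1,k)/F(n,k)$, and $G(n,k)/F(n,k)$, each of which is a rational function of $n,k,s,\ell$ obtained by cancelling rising factorials; the new $G$ differs from the one in Lemma~\ref{Lemma1} only by an extra $\tfrac{1}{\ell}$-shifted factor in the numerator and a $(1)_{n-1}$ in the denominator, so these ratios are mild modifications of the ones already displayed. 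Using $(a)_{n+1}=(a+n)(a)_n$ and $(a)_{n-1}=(a)_n/(a+n-1)$, I would get
\begin{align*}
\frac{F(n,k-1)}{F(n,k)} &= -\,\frac{\bigl(k+s+\tfrac{1-\ell}{\ell}\bigr)\bigl(k-s+\tfrac{1-\ell}{\ell}\bigr)}{\bigl(n+k+\tfrac{1-\ell}{\ell}\bigr)(n-k+1)},\\
\frac{G(n+1,k)}{F(n,k)} &= -\,\frac{\ell^{2}\bigl(n+\tfrac{1}{\ell}+s\bigr)\bigl(n+\tfrac{1}{\ell}-s\bigr)\bigl(n+\tfrac1\ell\bigr)}{(2\ell n+1)\,n\,(n-k+1)},\\
\frac{G(n,k)}{F(n,k)} &= \frac{\ell^{2}\,(n^{2}-s^{2})\,n}{(2\ell n+1)\bigl(n+k+\tfrac{1-\ell}{\ell}\bigr)\bigl(n+\tfrac1\ell\bigr)},
\end{align*}
subject to checking the exact constants; the key point is only that all three share the common denominator factor $(n-k+1)\bigl(n+k+\tfrac{1-\ell}{\ell}\bigr)$ after clearing, which is what makes the identity polynomial.

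Next I would clear denominators in the target relation
\[
(\ell k-\ell+1)\,\frac{F(n,k-1)}{F(n,k)}-(\ell k-\ell+2)=\frac{G(n+1,k)}{F(n,k)}-\frac{G(n,k)}{F(n,k)},
\]
multiplying through by $(2\ell n+1)\,n\,\bigl(n+\tfrac1\ell\bigr)\bigl(n+k+\tfrac{1-\ell}{\ell}\bigr)(n-k+1)$ so that both sides become genuine polynomials in $k$ (with coefficients polynomial in $n,s,\ell$). Writing $u=k+\tfrac{1-\ell}{\ell}$ for brevity, the left side is a cubic-in-$k$ expression of the form $-(\ell k-\ell+1)(u^{2}-s^{2})(2\ell n+1)n\bigl(n+\tfrac1\ell\bigr)-(\ell k-\ell+2)(\cdots)$ times the shared factor, and the right side expands to $-\ell^{2}n\bigl[(n+\tfrac1\ell)^{2}-s^{2}\bigr]\bigl(n+\tfrac1\ell\bigr)(n+u)-\ell^{2}(n^{2}-s^{2})n(n-k+1)\cdot(\text{stuff})$ after the same clearing. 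I would then check the polynomial identity in $k$ by comparing coefficients of $k^{0},k^{1},k^{2},k^{3}$ (equivalently, by evaluating at four convenient values such as $k=\tfrac{\ell-1}{\ell}$, $k=n+1$, $k=-n-\tfrac{1-\ell}{\ell}$, and $k=0$, at which several terms vanish). Finally, having established the ratio identity, multiplying both sides by $F(n,k)$ yields \eqref{Eqn 3.2}, and the boundary convention $1/(1)_m=0$ for $m\le -1$ is exactly what is needed for the stray $(1)_{n-k}$, $(1)_{n+s-1}$, $(1)_{n-s-1}$, $(1)_{n-1}$ factors to behave correctly at the edges of the ranges $n\ge s$, $n\ge k$.

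The main obstacle is purely bookkeeping: unlike Lemma~\ref{Lemma1}, here the certificate polynomials $p(k)=\ell k-\ell+1$ and $q(k)=\ell k-\ell+2$ are \emph{not} equal, so the relation is no longer a simple "$F(n,k-1)-F(n,k)$" telescoper and the cross terms $\,\ell k\,$ in $p,q$ must be tracked through the whole expansion; getting the degree-$3$ polynomial identity in $k$ to balance exactly, with the $+1$ versus $+2$ asymmetry producing precisely the $G(n,k)$ that also telescopes cleanly when later specialized to $k=1$, is where a sign or a missing $\bigl(n+\tfrac1\ell\bigr)$ would break everything. A secondary point to be careful about is the extra $\bigl(\tfrac1\ell\bigr)_n/(1)_n$ pair now present in $F$ (it was absent in Lemma~\ref{Lemma1}), which is what ultimately upgrades the cube $\left(\tfrac1\ell\right)_n^3$ structure of Theorem~\ref{thmguo1a} to the fourth-power structure of Theorem~\ref{thmguo1b}; one should confirm that this factor cancels identically in all three ratios above and therefore does not enter the polynomial check at all. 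Once \eqref{Eqn 3.2} is in hand, the deduction of Theorem~\ref{thmguo1b} will run exactly parallel to the proof of Theorem~\ref{thmguo1a}: set $k=1$, so $p(1)=1$ and $q(1)=2$, multiply by $\ell^{2}\left(\tfrac1\ell\right)_{1+s}\left(\tfrac1\ell\right)_{1-s}$, sum over $n$ from $s$ to $M$, use $G(s,1)=0$, and simplify the right side to the claimed product of rising factorials.
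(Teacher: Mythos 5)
Your plan is exactly the paper's proof: compute the three ratios $F(n,k-1)/F(n,k)$, $G(n+1,k)/F(n,k)$, $G(n,k)/F(n,k)$, clear the common denominator $(2\ell n+1)\bigl(n+k+\tfrac{1-\ell}{\ell}\bigr)(n-k+1)$, verify the resulting polynomial identity in $k$, and multiply back by $F(n,k)$; the structural observations (the extra $(\tfrac1\ell)_n/(1)_n$ factor cancelling from every ratio, the asymmetry $p(k)\neq q(k)$, and the subsequent $k=1$ telescoping) are all on target. However, two of your displayed ratios are off and the identity will not balance as written: the correct values are
\begin{align*}
\frac{G(n+1,k)}{F(n,k)} &= -\,\frac{\ell^{2}\bigl(n+\tfrac{1}{\ell}+s\bigr)\bigl(n+\tfrac{1}{\ell}-s\bigr)\bigl(n+\tfrac1\ell\bigr)}{(2\ell n+1)(n-k+1)},\qquad
\frac{G(n,k)}{F(n,k)} = -\,\frac{\ell^{2}\,n\,(n^{2}-s^{2})}{(2\ell n+1)\bigl(n+k+\tfrac{1-\ell}{\ell}\bigr)},
\end{align*}
i.e.\ there is no extra $n$ in the first denominator, and the second ratio is negative with no $\bigl(n+\tfrac1\ell\bigr)$ in its denominator (the three quotients $(1)_{n+s}/(1)_{n+s-1}$, $(1)_{n-s}/(1)_{n-s-1}$, $(1)_n/(1)_{n-1}$ contribute $(n+s)(n-s)n$ to the numerator, and the sign $(-1)^{k-1}/(-1)^{k}=-1$ is not cancelled). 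With those corrections the degree-three identity in $k$ checks out exactly as you describe.
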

\begin{proof}
	We have
	\begin{align*}
	\displaystyle \frac{F(n,k-1)}{F(n,k)} & = - \frac{\left(k+s+ \frac{1-\ell}{\ell} \right) \left(k-s + \frac{1-\ell}{\ell}\right)}{\left(n+k+ \frac{1-\ell}{\ell}\right) \left(n-k+1 \right)},
	\end{align*}
	
	\begin{align*}
	\displaystyle \frac{G(n+1,k)}{F(n,k)} & = - \frac{\ell^{2} \cdot \left(n+\frac{1}{\ell} + s\right) \left(n + \frac{1}{\ell} -s\right) \left(\frac{1}{\ell}+n\right)}{\left(2\ell n + 1\right)\left(n-k+1\right)},
	\end{align*}
	and
	\begin{align*}
	\displaystyle \frac{G(n,k)}{F(n,k)} & = -\frac{\ell^2 \cdot \left(n^{3} - ns^{2}\right)}{\left(2\ell n +1\right)\left(n+k+ \frac{1-\ell}{\ell}\right)}.
	\end{align*}
	Noting that
	
	\begin{center}
		$\left(\ell k -\ell +1\right)\frac{F(n,k-1)}{F(n,k)} -\left(\ell k -\ell +2\right)  = - \left\{ \frac{\left(k+s+\frac{1-\ell}{\ell}\right)\left(k-s+\frac{1-\ell}{\ell}\right)\left(\ell k -\ell +1\right)  + \left(n+k+ \frac{1-\ell}{\ell}\right) \left(n-k+1 \right)\left(\ell k -\ell + 2 \right) }{\left(n+k+ \frac{1-\ell}{\ell}\right) \left(n-k+1 \right)}\right\},$
	\end{center}
	and
	\begin{align*}
	\frac{G(n+1,k)}{F(n,k)} - \frac{G(n,k)}{F(n,k)} & = - \ell^2 \cdot \left\{\frac{ \left(n^2+\frac{2n}{\ell}+\frac{1}{\ell^{2}}-s^2\right) \left(n+k+ \frac{1-\ell}{\ell}\right)\left(\frac{1}{\ell}+n\right) - \left(n^3-ns^2\right) \left(n-k+1\right) }{\left(2\ell n+1\right)\left(n+k+ \frac{1-\ell}{\ell}\right) \left(n-k+1 \right)}\right\}.
	\end{align*}
	It is easy to obtain that
	\begin{center}
		$\left(\ell k -\ell +1\right)\frac{F(n,k-1)}{F(n,k)} -\left(\ell k -\ell +2\right) = \frac{G(n+1,k)}{F(n,k)} - \frac{G(n,k)}{F(n,k)},$
	\end{center}
	and hence
	\begin{center}
		$\left(\ell k - \ell + 1\right)F(n,k-1) - \left(\ell k -\ell + 2\right)F(n,k)= G(n+1,k) - G(n,k).$
	\end{center}
	
\end{proof}

\begin{proof}[Proof of theorem~\ref{thmguo1b}]

	Multiplying both sides of  \eqref{Eqn 3.2} by $\ell^2 \left(\frac{1}{\ell}\right)_{k+s} \left(\frac{1}{\ell}\right)_{k-s},$ we get
	\begin{align*}
	&\displaystyle \ell^2 \left(\frac{1}{\ell}\right)_{k+s} \left(\frac{1}{\ell}\right)_{k-s} \left\{\left(\ell k - \ell + 1\right)F(n,k-1) - \left(\ell k -\ell + 2\right)F(n,k)\right\} \\
	& = \ell^2 \left(\frac{1}{\ell}\right)_{k+s} \left(\frac{1}{\ell}\right)_{k-s} \left\{G(n+1,k) - G(n,k)\right\}.
	\end{align*}
	Substituting $k=1$ and then taking sum on both sides with $n$ ranging from $s$ to $M$, we obtain
	\begin{align*}
	\displaystyle  \sum_{n=s}^{M} \ell^2 \left(\frac{1}{\ell}\right)_{1+s} \left(\frac{1}{\ell}\right)_{1-s} \left\{F(n,0) - 2F(n,1)\right\}  = \ell^2 \left(\frac{1}{\ell}\right)_{1+s} \left(\frac{1}{\ell}\right)_{1-s} \left\{G(M+1,1) - G(s,1)\right\}. 
	\end{align*}
	One can easily find that 
	\begin{align*}
	\displaystyle  \sum_{n=s}^{M} \ell^2 \left(\frac{1}{\ell}\right)_{1+s} \left(\frac{1}{\ell}\right)_{1-s} \left\{F(n,0) - 2F(n,1)\right\}  = \displaystyle \sum_{n=s}^{M} \left(2\ell n +1\right) \left(2\ell^2n^2 + 2\ell n +1 -\ell^2 s^2 \right) \frac{\left(\frac{1}{\ell}\right)_{n+s} \left(\frac{1}{\ell}\right)_{n-s} \left(\frac{1}{\ell}\right)_{n}^2}{\left(1\right)_{n+s} \left(1\right)_{n-s} \left(1\right)_{n}^2}.
	\end{align*}
	Again,
	\begin{align*}
	\ell^2 \left(\frac{1}{\ell}\right)_{1+s} \left(\frac{1}{\ell}\right)_{1-s} \left\{G(M+1,1) - G(s,1)\right\} &  = \ell^4 \frac{\left(\frac{1}{\ell}\right)_{M+s+1} \left(\frac{1}{\ell}\right)_{M-s+1} \left(\frac{1}{\ell}\right)_{M+1}^2}{\left(1\right)_{M+s} \left(1\right)_{M-s} \left(1\right)_{M}^2}   \\
	& =  \frac{\left(1+\frac{1}{\ell}\right)_{M+s} \left(1+\frac{1}{\ell}\right)_{M-s} \left(1+\frac{1}{\ell}\right)_{M}^2}{\left(1\right)_{M+s} \left(1\right)_{M-s} \left(1\right)_{M}^2}.
	\end{align*}
	This completes the proof.
\end{proof}

\section{Acknowledgement} 
The second author has been supported by the institute doctoral fellowship of National Institute of Technology, Silchar.


\begin{thebibliography}{99}
%	\bibitem{cohen} H. Cohen, \textit{Number Theory} Vol. {\bf II}. Analytic and Modern Tools, Graduate Text in Mathematics 240, Springer, New York (2007).
      %  \bibitem{bauer} G. Bauer, \textit{Von den coefficienten der Reihen von Kugelfunctionen einer variabeln}, J. Reine Angew. Math. {\bf 56} (1859), 101-121.
        \bibitem{gasper} G. Gasper and M. Rahman, \textit{Basic Hypergeometric series}, Encyclopedia of Mathematics and its Applications. vol. 96, 2nd edn, Cambridge University Press, Cambridge (2004). \label{gasper}
        
		\bibitem{ekhad} S. B. Ekhad and D. Zeilberger, \textit{A WZ proof of Ramanujan's formula for $\pi$}, Geometry, Analysis, and Mechanics, J.M. Rassias (ed.), World Scientific, Singapore (1994), 107--108.
		\bibitem{guillera1} J. Guillera, \textit{Some binomial series obtained by the WZ-method}, Adv. Appl. Math. {\bf 29} (2002), no. 4, 599--603.
		\bibitem{guillera2} J. Guillera, \textit{Generators of some Ramanujan formulas}, Ramanujan J. {\bf 11} (2006), no. 1, 41--48.
%	\bibitem{rahman} G. Gasper and M. Rahman, \textit{Basic Hypergeometric Series. Encyclopedia of Mathematics and Its Applications,} vol. 96, 2nd edn. Cambridge University Press, Cambridge (2004).
	
%	\bibitem{guguo}	C.-Y. Gu and V.J.W. Guo, \textit{Two q-congruences from Carlitz’s formula}, Period. Math. Hung., (2020), https://doi.org/10.1007/s10998-020-00341-2.
	\bibitem{guo} V. J. W. Guo, \textit{Some generalizations of a supercongruence of van Hamme}, Integral Transforms Spec. Funct. \textbf{28} (2017), no. 12, 888-899.
	%\bibitem{guo2}	V. J. W Guo, \textit{A $q$-analogue of a Ramanujan-type supercongruence involving central binomial coefficients}, J Math Anal Appl, 2018. doi:10.1016/j.jmaa.2017.09.022.
%	\bibitem{guoschlleven}	V. J. W. Guo and M. J. Schlosser, \textit{ Some new q-congruences for truncated basic hypergeometric series: even powers}, Results Math. {\bf 75} (2020), no. 1, Art. 1.
	\bibitem{guo1} V. J. W. Guo and S.-D. Wang, \textit{Some congruences involving fourth powers of central $q$-binomial coefficients}, Proc. Roy. Soc. Edinburgh Sect. A  \textbf{150} (2020), 1127-1138.
	\bibitem{jana1} A. Jana and G. Kalita, \textit{Supercongruences for sums involving rising factorial $\left(\frac{1}{\ell}\right)_k^3$}, Integral Transforms Spec. Funct. {\bf 30} (2019), no. 9, 683--692.
	\bibitem{jana3} A. Jana and G. Kalita, \textit{Supercongruences for sums involving fourth power of some rising factorials}, Proc Math Sci {\bf 130} (2020), Art. 59, 13pp.

%\bibitem{unpub1} A. Jana and G. Kalita, \textit{Proof of a supercongruence conjecture of (F.3) of Swisher using the WZ-method}, arXiv:2011.02762.
%\bibitem{unpub2} A. Jana and G. Kalita, \textit{Extension of a conjectural supercongruence of (G.3) of Swisher using Zeilberger's algorithm}, arXiv:2011.02757.
\bibitem{jana8} A. Jana and G. Kalita, \textit{On Some Conjectural Supercongruences for Sums Involving Certain Rising Factorials}, Results Math. {\bf 76} (2021), Art. 155, 14pp.
	\bibitem{long1} L. Long, \textit{Hypergeometric evaluation identities and supercongruences}, Pacific J. Math. {\bf 249} (2011), no. 2, 405-418.
	\bibitem{mortenson} E. Mortenson, \textit{A $p$-adic supercongruence conjecture of vanHamme}, Proc. Amer. Math. Soc. \textbf{136} (2008), no. 12, 4321-4328.
	%\bibitem{mccarthy} D. McCarthy and R. Osburn, \textit{A $p$-adic analogue of a formula of Ramanujan},  Arch. Math. {\bf 91} (2008), no. 6, 492--504.
	\bibitem{wilf} M. Petkov$\hat{s}$ek, H. S. Wilf, and D. Zeilberger, \textit{$A =B$}, A. K. Peters, Ltd., Wellesley, Mass. (1996).
	
	\bibitem{ramanujan} S. Ramanujan, \textit{Modular equations and approximations to $\pi$}, Quart. J. Math. \textbf{45}, (1914), 350-372. In Collected papers of Srinivasa Ramanujan, pages 23--39. AMS Chelsea Publ., Providence, RI, 2000.
%	\bibitem{sun1} Z.-W. Sun, \textit{ Supercongruences motivated by $e$.} J. Number Theory {\bf 147}  (2015), 326–-341.
%	\bibitem{sun2}  Z.-W. Sun, \textit{ Binomial coefficients, Catalan numbers and Lucas quotients,} Sci. China Math., \textbf{53} (2010), 2473--2488.
	\bibitem{hamme} L. VanHamme, \textit{Some conjectures concerning partial sums of generalized hypergeometric series}, p-adic functional analysis (Nijmegen, 1996), Lecture Notes in Pure and Appl. Math. \textbf{192} (1997), 223--236.
%	\bibitem{wang-ni} C. Wang and H.-X. Ni,  \textit{	Some q-congruences arising from certain identities}, arXiv:2003.10883, (Preprint).
	%	\bibitem{wang} S.-D. Wang, \textit{Some Supercongruences involving $\binom{2k}{k}^4$},  Journal of Difference Equations and Applications, 24:9, 1375-1383, DOI: 10.1080/10236198.2018.1485667.
%	\bibitem{yue} X. Wang and  M. Yue, \textit{ Some $q$-supercongruences from Watson’s $_8\phi_7$ transformation formula}, Results Math., {\bf 75} (2020), no. 2,  Art. 71.
 \bibitem{wang}  S.-D. Wang, \textit{Some supercongruences involving $ {2k \choose k }^4$},  J. Differ. Equ.
Appl.,  {\bf 24} (2018), no. 9, 1375--1383.
	\bibitem{zudilin} W. Zudilin, \textit{Ramanujan-type supercongruences}, J. Number Theory {\bf 129} (2009), no. 8, 1848-1857.
	
\end{thebibliography}
\end{document}